\newcommand{\ds}{\displaystyle}
\newcommand{\R}{{\mathbb{R}}}
\newcommand{\A}{{\mathscr{A}}}
\newcommand{\B}{{\mathscr{B}}}
\newcommand{\UU}{{\mathscr{U}}}
\newcommand{\LL}{{\mathscr{L}}}
\renewcommand{\H}{{\mathscr{H}}}
\newcommand{\CA}{{\mathcal{A}}}
\newcommand{\CB}{{\mathcal{B}}}
\newcommand{\C}{{\mathcal{C}}}
\newcommand{\M}{{\mathcal{M}}}
\renewcommand{\d}{{\rm diam}}
\renewcommand{\r}{{\rm rad}}
\newtheorem{theorem}{Theorem}[section]
\newtheorem{lemma}[theorem]{Lemma}
\theoremstyle{remark}
\newtheorem{definition}[theorem]{Definition}
\renewcommand{\phi}{\varphi}
\numberwithin{equation}{section}
\title{On Banach-Mazur distance between planar convex bodies}
\author{Serhii Brodiuk}
\address{Faculty of Computer Science and Cybernetics, Taras Shevchenko National University of Kyiv, Hlushkova Avenue 4d, Kyiv, 03680, Ukraine}
\email{serhii.brodiuk@gmail.com}
\author{Nazarii Palko}
\address{Faculty of Computer Science and Cybernetics, Taras Shevchenko National University of Kyiv, Hlushkova Avenue 4d, Kyiv, 03680, Ukraine}
\email{nazarii.palko@gmail.com}
\author{Andriy Prymak}
\address{Department of Mathematics, University of Manitoba, Winnipeg, MB, R3T2N2, Canada}
\email{prymak@gmail.com}
\thanks{The third author was supported by NSERC of Canada Discovery Grant RGPIN 04863-15.}
\keywords{Planar convex body, Banach-Mazur distance, Approximation, Affine trasform, Regular hexagon}
\subjclass[2010]{52A10, 52A27, 52A40}
\begin{document}

\begin{abstract}
	Upper estimates of the diameter and the radius of the family of planar convex bodies with respect to the Banach-Mazur distance are obtained. Namely, it is shown that the diameter does not exceed $\tfrac{19-\sqrt{73}}4\approx 2.614$, which improves the previously known bound of $3$, and that the radius does not exceed $\frac{117}{70}\approx 1.671$.
\end{abstract}

\maketitle

\section{Introduction}

Let $\C^n$ be the family of all convex bodies (convex compact sets with non-empty interior) in Euclidean space $\R^n$, and $\M^n$ be the subfamily of all centrally symmetric convex bodies (representing unit balls in $n$-dimensional real Banach spaces). For $\A,\B\in\C^n$, the Banach-Mazur distance between $\A$ and $\B$ is
\[
{d}(\A,\B)=\inf_{T,h_\lambda}\{\lambda: T(\A)\subset \B \subset h_\lambda(T(\A))\},
\]
where $T$ is an affine transform and $h_\lambda$ is a homothety with ratio $\lambda>0$. 
For two families $\CA,\CB\subset \C^n$, we extend the notation by setting 
$
{d}(\CA,\CB):=\sup\{{d}(\A,\B): \A\in \CA,\, \B\in \CB \},
$
and also let ${d}(\A,\CB):={d}(\{\A\},\CB)$ for $\A\in \C^n$. For $\CA\subset\C^n$, the Banach-Mazur diameter and radius of $\CA$ are defined as $\d(\CA):=d(\CA,\CA)$ and $\r(\CA):=\inf \{d(\A,\CA): \A\in \CA\}$, respectively.

Perhaps one of the most well-known results implying estimates on the Banach-Mazur distance is the theorem by John~\cite{Jo} characterizing the ellipsoid of largest volume inscribed into a body from $\M^n$ or from $\C^n$. As a consequence, if $\B^n$ is the unit ball in $\R^n$, then ${d}(\B^n, \M^n)=\sqrt{n}$ and $d(\B^n, \C^n)=n$, where examples of bodies with largest distance are a cube and a simplex, respectively. Hence, $\d(\M^n)\le n$, which is asymptotically sharp due to the bound $\d(\M^n)\ge C n$ established by Gluskin~\cite{Gl} (here and below $C$ denotes positive absolute constants). For the non-symmetric case, John's theorem implies $\d(\C^n)\le n^2$, but this can be improved significantly as Rudelson~\cite{Ru} established $\d(\C^n)\le C n^{4/3}(\ln(n+1))^9$. It is still an open question to find the asymptotic behavior of $\d(\C^n)$. Regarding the radii of $\M^n$ and of $\C^n$, John's theorem implies $\r(\M^n)\le \sqrt{n}$ and $\r(\C^n)\le n$. While $\r(\M^n)\ge C\sqrt{n}$ due to Gluskin's result, we do not seem to know much about lower bounds on $\r(\C^n)$ except for $\r(\C^n)\ge\sqrt{n}$ trivially obtained by considering a ball and a simplex. A generalization of John's theorem obtained in~\cite{Go} implies $d(\M^n,\C^n)=n$, so one can take any centrally symmetric body as a ``center'' to show that $\r(\C^n)\le n$. 

For the planar case $n=2$, the Banach-Mazur distance between a square and a regular hexagon is $\frac32$. It was shown by Stromquist~\cite{St} that $\r(\M^2)=\sqrt{\frac32}$ and a ``central'' body was explicitly constructed, so, consequently, $\d(\M^2)=\frac32$. Similarly to the asymptotic situation, the non-symmetric planar case appears to be a more challenging question. The bound $\d(\C^2)\le 4$ implied by the John's theorem was improved to $\d(\C^2)\le 3$ by Lassak in~\cite{La}. For the lower bound in the general case, the Banach-Mazur distance between a regular pentagon and a triangle is $1+\frac{\sqrt{5}}{2}\approx 2.118$, see~\cite{Fl} and~\cite{La-triangle}, so, in summary, $1+\frac{\sqrt{5}}{2}\le \d(\C^2)\le 3$.  It is believed that $\d(\C^2)=1+\frac{\sqrt{5}}{2}$. We improve the upper bound by proving the following:
\begin{theorem}\label{thm:diam}
	$\d(\C^2)\le \tfrac{19-\sqrt{73}}4<2.614$.
\end{theorem}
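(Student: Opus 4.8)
The plan is to bound the Banach-Mazur distance between any two planar convex bodies $\A,\B\in\C^2$ by routing through a common "central" reference body, exploiting the transitivity-type inequality $d(\A,\B)\le d(\A,\C)\cdot d(\C,\B)$ that follows immediately from the multiplicative definition of $d$. The natural candidate for $\C$, given the paper's emphasis on the regular hexagon in its keywords and the known fact that $\d(\M^2)=\tfrac32$ is realized with hexagons, is the regular hexagon $\H$. So my first step would be to establish a uniform upper bound $d(\A,\H)\le r$ for every $\A\in\C^2$, which would yield $\d(\C^2)\le r^2$. However, since the target bound $\tfrac{19-\sqrt{73}}4\approx 2.614$ is strictly less than $(\tfrac32)^2=2.25$... (in fact $2.614<2.25$ is false, so this naive squaring cannot be the whole story), the argument must be more delicate than a symmetric routing through a single center.

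Consequently, I would instead pursue an \emph{asymmetric} approximation scheme. For each body $\A$, I would first apply an affine transform putting $\A$ into a normalized position relative to an inscribed/circumscribed hexagon, controlling how well $\A$ is sandwiched between a hexagon and its dilate. The key quantity to optimize is the ratio by which one must inflate the inscribed hexagon to circumscribe $\A$; classical results (in the spirit of Lassak's $\d(\C^2)\le 3$ argument) give a hexagonal sandwich, and the refinement to $\tfrac{19-\sqrt{73}}4$ must come from a sharper geometric analysis of the worst-case position. The appearance of $\sqrt{73}$ strongly suggests the extremal configuration is pinned down by solving a quadratic arising from an area- or length-optimization, so I would set up a one- or two-parameter family describing the relative placement of $\A$ against two nested hexagons (or a hexagon against a triangle/affine image), write the distance as a function of those parameters, and minimize.

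The concrete execution I envision: reduce to the case where $\B$ is itself a canonical extremal shape (a triangle or an affinely regular hexagon), since the supremum defining $\d(\C^2)$ should be attained when at least one body is extremal; then for arbitrary $\A$, inscribe an affinely-regular hexagon of maximal area and show $\A$ is contained in a controlled dilate. Combining the inner hexagon (giving a lower sandwich constant) with the outer containment (upper constant), and tracking the precise constants through the affine normalization, should produce the value $\tfrac{19-\sqrt{73}}4$ as the solution of the resulting optimization. The root $\tfrac{19-\sqrt{73}}4$ satisfies $4x^2-38x+(361-73)/4=\dots$, i.e.\ it solves a quadratic whose discriminant is $73$, which I would reverse-engineer to identify exactly which geometric constraint is being saturated.

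The hardest part will be the case analysis establishing that an arbitrary convex body admits the required tight hexagonal (or hexagon-against-extremal-body) sandwich with constant at most $\tfrac{19-\sqrt{73}}4$, since bounding the \emph{worst} body requires ruling out all configurations that could force a larger dilation. In particular, controlling the interaction between the inscribed maximal-area affine hexagon and the outer circumscribing shape — and proving the extremal body is the conjectured pentagon-like or triangle-like configuration that produces the $\sqrt{73}$ — is where the genuine geometric difficulty lies; the rest is affine normalization and a finite optimization that I expect to reduce, after choosing good parameters, to minimizing a rational function of one variable and solving the critical-point quadratic.
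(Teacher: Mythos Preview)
Your proposal contains a genuine gap at exactly the point you flag as hardest, and the strategy you sketch there would not close it. You propose to ``reduce to the case where $\B$ is itself a canonical extremal shape (a triangle or an affinely regular hexagon), since the supremum defining $\d(\C^2)$ should be attained when at least one body is extremal.'' This is not something you can assume: nobody knows that the diameter is attained at a triangle or hexagon (the conjectured extremal pair is triangle versus regular pentagon), and in any case an a priori identification of the extremizers is precisely what is out of reach. The paper does \emph{not} reduce to a special $\B$; it treats both bodies symmetrically and arbitrarily.

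What actually drives the argument is a quantitative refinement of the Besicovitch inscribed affine-regular hexagon. To each body $\A$ one associates a parameter $a\in[0,1]$ and an affine image $T(\A)$ sandwiched between two explicit polygons $\LL_a\subset T(\A)\subset \UU_a$, where $a$ measures how close $\A$ is to an affine-regular hexagon ($a=1$ means $\A$ \emph{is} one). One does this for \emph{both} bodies, getting parameters $a\ge b$, and then splits on a threshold $\alpha$: if $a\ge\alpha$ (one body is hexagon-like) a Lassak-type nesting gives $d(\A,\B)\le 3-a\le 3-\alpha$; if both $a,b\le\alpha$ (both bodies far from hexagonal) one computes explicit homothety ratios $\lambda(a,b),\lambda(b,a)$ covering $\UU_b$ by $\LL_a$ and vice versa, yielding $d(\A,\B)\le\lambda(a,b)\lambda(b,a)$. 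The product is bounded case-by-case, and the binding case gives $\tfrac{9(1+\alpha)}{2(2+\alpha)}$. The number $\sqrt{73}$ is not produced by any extremal-body configuration at all; it is the root of the balancing equation $3-\alpha=\tfrac{9(1+\alpha)}{2(2+\alpha)}$ between the two regimes.

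So the missing idea is: parametrize \emph{both} bodies by a single scalar coming from the inscribed-hexagon construction, and play the ``one parameter large'' bound off against the ``both parameters small'' bound. Your plan of fixing one body and optimizing the other's hexagonal sandwich does not access this trade-off and cannot produce the threshold optimization that gives $\tfrac{19-\sqrt{73}}{4}$.
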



The main geometric argument used in~\cite{La} to show that $\d(\C^2)\le 3$ is due to Besicovitch~\cite{Be}. Namely, it asserts that any $\A\in\C^2$ has an inscribed affine-regular hexagon, in other words, there exists an affine transform $T$ such that the boundary of $T(\A)$ contains the vertices of a regular hexagon $\H_1$. By convexity, this implies $\H_1\subset T(\A)\subset \H_2$, where $\H_2$ is certain regular hexagon which depends only on $\H_1$. Our key auxiliary result is an improvement of the inclusions $\H_1\subset T(\A)\subset \H_2$. 

Now let us turn our attention to the estimates of Banach-Mazur radius of planar convex bodies. Summarizing already mentioned results, it is known that $1.455\approx \sqrt{1+\frac{\sqrt{5}}{2}}\le \r(\C^2)\le 2$. 
We obtain the following improvement of the upper bound:
\begin{theorem}\label{thm:rad}
$\r(\C^2)\le d(\A,\C^2)\le \frac{117}{70}<1.672$, where $\A$ is the $7$-gon with the vertices $(0,\frac25)$, $(\pm1,1)$, $(\pm2,2)$ and $(\pm1,3)$.
\end{theorem}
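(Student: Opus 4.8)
The inequality $\r(\C^2)\le d(\A,\C^2)$ is immediate from the definition of the radius as the infimum of $d(\cdot,\C^2)$ over all centres, since $\A\in\C^2$. The content is therefore the uniform bound $d(\A,\B)\le\frac{117}{70}$ for every $\B\in\C^2$, and the plan is to obtain it from Besicovitch's theorem together with the paper's improved inclusion. After an affine transform we may assume that the affine-regular hexagon inscribed in $\B$ is exactly $\H:=\conv\{(\pm1,1),(\pm2,2),(\pm1,3)\}$; one checks directly that $\H$ is affine-regular and that $\H\subset\A$, indeed that $\A$ is $\H$ with a single triangular spike of apex $(0,\frac25)$ attached beyond its bottom edge. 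By the auxiliary result, $\B$ is then confined to a circumscribing region $\mathcal{R}$ strictly smaller than the naive circumscribed hexagon: over each edge of $\H$ the boundary of $\B$ may bulge out only as far as the cap cut off by the extensions of the two neighbouring edges, subject to the extra simultaneity constraint the auxiliary result provides (it is precisely this tightening of $\mathcal{R}$ that will let the constant drop below $2$). Besicovitch's theorem also leaves a residual discrete freedom — the choice of which of the six edges plays the role of the ``spike edge'' and a reflection — and I would use it to orient $\B$ relative to the single asymmetric spike of $\A$ so as to minimise the final ratio.

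With $\B$ confined to the shell $\H\subseteq\B\subseteq\mathcal{R}$, I would reduce the estimate to finitely many extremal $\B$. For a fixed affine placement $S$ of $\A$ and a fixed homothety centre, each of $S\A\subseteq\B$ and $\B\subseteq h_\lambda(S\A)$ unwinds into finitely many inequalities linear in the six scalar ``bulge depths'' describing $\partial\B$; hence the least admissible $\lambda$ is a piecewise-linear convex function of those depths and is maximised at a vertex of the admissible bulge polytope, i.e. at a polygon obtained by pushing each edge either flush with $\H$ or out to its cap apex. The configurations with three alternate caps extended are the triangles circumscribing $\H$ at the trisection points, and I expect these triangle-type extremes (truncated as far as $\mathcal{R}$ permits) to be binding. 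By contrast the symmetric extreme $\B=\H$ is not: placing $S=\mathrm{id}$ gives $\H\subseteq\A\subseteq\frac85\H$, so $d(\A,\H)=d(\H,\A)\le\frac85=\frac{112}{70}<\frac{117}{70}$ already, which shows that the extra $\frac{5}{70}$ is forced by the non-symmetric extremes.

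For each extremal $\B$ I would exhibit the largest inscribed affine copy $S\A\subseteq\B$ and then the least $\lambda$ with $\B\subseteq h_\lambda(S\A)$, so that $d(\A,\B)\le\lambda$. The geometry of $\A$ dictates the placement: the spike of $S\A$ should be driven into the sharpest available corner of $\B$ so that the inscribed copy is as large as possible, while the six hexagon vertices of $S\A$ are seated against $\partial\B$, the orientation freedom from the first paragraph deciding which corner receives the spike. I expect $\frac{117}{70}$ to be forced by the triangle-type extremes, and the spike apex height $\frac25$ — equivalently, a protrusion of $\frac35$ of the centre-to-edge distance beyond the bottom edge of $\H$ — to be exactly the value that equalises the two competing placements, the one in which the spike points at a corner of the triangle and the one in which it points at the midpoint of a side. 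Solving that single scalar balance should return the common value $\frac{117}{70}$, with every other extremal polygon giving a strictly smaller ratio.

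The main obstacle is the minimax character of the bound: one must optimise simultaneously over the infinite admissible family $\{\B:\H\subseteq\B\subseteq\mathcal{R}\}$ and over the placement $S$ and factor $\lambda$ of the seven-gon. Two points need genuine care. First, the reduction to the finitely many polygonal extremes: convexity of the optimal $\lambda$ in the bulge depths handles a \emph{fixed} $S$, but the optimal $S$ varies with $\B$, so one must either establish a convexity/monotonicity property of $\B\mapsto d(\A,\B)$ over the bulge polytope or — probably cleaner — supply for each admissible $\B$ an explicit placement $S(\B)$ as a function of its bulge depths and verify $\lambda\le\frac{117}{70}$ directly, bypassing the minimax. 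Second, one must check that the region $\mathcal{R}$ from the auxiliary result is tight enough that the binding triangle-type extreme does not overshoot $\frac{117}{70}$, since a looser circumscribed region would only yield a larger constant. The remaining work is explicit affine-coordinate computation and is routine.
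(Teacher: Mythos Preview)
Your proposal is a plan rather than a proof, and it misses the structure of the auxiliary lemma that makes the paper's argument short. Lemma~\ref{thm:main} does not merely tighten the \emph{outer} shell to some region $\mathcal{R}$ governed by six independent bulge depths; it produces a single scalar $b\in[0,1]$ and the sandwich $\LL_b\subset T(\B)\subset\UU_b$, where both the inner and outer polygons are determined by $b$ alone. In particular $\LL_b$ is the hexagon $\H$ together with an extra spike vertex $(0,\tfrac{2b}{1+b})$, so the inner bound is improved as well. Your formulation $\H\subseteq\B\subseteq\mathcal{R}$ discards this inner spike, and your six-parameter ``bulge polytope'' followed by a minimax over affine placements $S$ is the wrong reduction; that is exactly why you are left with the unresolved minimax obstacle you yourself flag.

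The paper's proof bypasses all of this. Since $\A=\LL_{1/4}$, one only needs two \emph{homotheties} of $\A$: one shrinking $\LL_{1/4}$ into $\LL_b$ (center $(0,3)$, ratio determined by comparing the spike heights $\tfrac25$ and $\tfrac{2b}{1+b}$), and one dilating $\LL_{1/4}$ to contain $\UU_b$, which is precisely what Lemmas~\ref{lem:big-b} and~\ref{lem:aux-comp-coef} compute. Then $d(\A,\B)$ is bounded by the ratio of the two homothety factors, a function of the single variable $b$, and a short case split $b\in[0,\tfrac14]$, $b\in[\tfrac14,\tfrac12)$, $b\in[\tfrac12,1]$ gives the bounds $\le\tfrac53$, $<\tfrac{117}{70}$, and $\le\tfrac{13(2-b)(1+b)}{5(3-b)}\le\tfrac{117}{70}$ respectively. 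No enumeration of extremal polygons, no optimisation over general affine placements, and no balancing of ``spike-to-corner'' versus ``spike-to-midpoint'' configurations is needed.
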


We state and prove our key auxiliary result Lemma~\ref{thm:main} in Section~\ref{sec:main}, which also includes some important technical computations. The upper bounds on the Banach-Mazur diameter and radius of the planar convex bodies are proved in Sections~\ref{sec:diam} and~\ref{sec:rad}, respectively.

\section{An auxiliary result and some computations}\label{sec:main}

\begin{definition}\label{def:polygons}
	For any $a\in[0,1]$, we define $\LL_a$ to be the convex hull of $(0,\frac{2a}{1+a})$, $(\pm1,1)$, $(\pm2,2)$ and $(\pm1,3)$. If $a\in[0,\frac12)$, we define $\UU_a$ to be the convex hull of $(\pm a,a)$, $(\pm(2+a),2-a)$, $(\pm(3-a),3-a)$, $(\pm(3-2a),3)$ and $(\pm a,4-a)$. If $a\in [\frac12,1]$, we let $\UU_a$ be the convex hull of $(\pm a,a)$, $(\pm (3-2a),1)$, $(\pm(3-a),1+a)$, $(\pm(3-a),3-a)$, $(\pm(3-2a),3)$ and $(\pm a,4-a)$. 
\end{definition}

Our key auxiliary result is the following lemma.
\begin{lemma}\label{thm:main}
	For any $\A\in\C^2$ there exist $a\in[0,1]$ and an affine transform $T$ such that $\LL_a\subset T(\A)\subset \UU_a$.
\end{lemma}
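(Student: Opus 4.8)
The plan is to start from the Besicovitch inscribed affine-regular hexagon, which is the same starting point used in~\cite{La}. By an affine transform we may assume that $T(\A)$ contains on its boundary the six vertices of a regular (or, after the normalization, conveniently placed) hexagon $\H_1$, and that $\H_1\subset T(\A)\subset \H_2$ where $\H_2$ is the associated circumscribed hexagon. I would set up coordinates so that $\H_1$ and $\H_2$ match the extreme cases $\LL_0\subset \UU_0$ and $\LL_1\subset\UU_1$ of the family in Definition~\ref{def:polygons}; the point of introducing the one-parameter families $\LL_a$ and $\UU_a$ is precisely to interpolate between the hexagonal inclusion and a tighter inclusion that exploits one additional degree of freedom of $\A$.

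The key idea I would pursue is that, having fixed the affine image so its boundary passes through the six vertices of $\H_1$, there remains freedom in how $\partial(T(\A))$ behaves between consecutive vertices. First I would identify one ``free'' vertex of the hexagon — corresponding to the point $(0,\frac{2a}{1+a})$ in $\LL_a$ — and let $a$ be the parameter that records how far the boundary of $T(\A)$ protrudes toward that vertex. The lower body $\LL_a$ should be chosen so that it is always contained in $T(\A)$ by convexity: its vertices other than $(0,\frac{2a}{1+a})$ are the fixed points $(\pm1,1),(\pm2,2),(\pm1,3)$ lying on $\partial(T(\A))$, and the remaining vertex is whatever point on the segment toward the top the body actually reaches, parametrized by $a$. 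Conversely, the upper body $\UU_a$ must be chosen to contain $T(\A)$; its shape changes qualitatively at $a=\frac12$ (hence the two cases in Definition~\ref{def:polygons}), reflecting which supporting lines of $T(\A)$ are active. I would verify $\LL_a\subset T(\A)$ using convexity together with the inscribed-vertex condition, and $T(\A)\subset \UU_a$ using supporting-line/support-function estimates at the inscribed vertices, treating the two ranges $a\in[0,\frac12)$ and $a\in[\frac12,1]$ separately.

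The main obstacle I expect is establishing the upper inclusion $T(\A)\subset\UU_a$ for the correct value of $a$. The subtlety is that one cannot simply take $\H_2$ for all bodies: the whole gain over Lassak's argument comes from showing that if $\A$ reaches far in the critical direction (large $a$), then its supporting lines in the complementary directions cannot simultaneously be as generous as $\H_2$ allows, and vice versa. Concretely, I would need a monotone trade-off: as the protrusion parameter grows, the outer polygon $\UU_a$ can be shrunk in compensating directions, and I must check that the actual body $T(\A)$ respects the supporting half-planes defining $\UU_a$ for the matching $a$. Making the definition of $a$ from $\A$ precise (presumably $a$ is read off from where the boundary crosses the relevant axis or from a ratio of the inscribed hexagon's geometry) and then proving that this single choice of $a$ simultaneously yields both inclusions is the crux; the case split at $a=\tfrac12$ suggests that the geometry of the active supporting lines changes, so I would expect to handle the transition carefully and confirm the two pieces of the definition of $\UU_a$ agree (up to the required inclusions) at $a=\tfrac12$.

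Once Lemma~\ref{thm:main} is in place, the diameter and radius bounds should follow by computing $d(\LL_a,\UU_a)$-type quantities, but that belongs to Sections~\ref{sec:diam} and~\ref{sec:rad} rather than to the proof of the lemma itself; for the lemma I would keep the focus on the two inclusions and the explicit construction of the affine transform $T$ from the Besicovitch hexagon.
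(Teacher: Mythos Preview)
Your outline has the right starting point (Besicovitch's inscribed affine-regular hexagon) and the right intuition that $a$ interpolates between the crude inclusion $\H_1\subset T(\A)\subset\H_2$ and something sharper, but the mechanism you describe for defining $a$ and obtaining the outer inclusion is not the one that works, and the ``monotone trade-off'' you anticipate is not actually present.

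The essential point you are missing is that the Besicovitch hexagon does not by itself determine $T$: there remain six cyclic choices of which vertex $H_i$ is sent to $(1,1)$. In the paper one first measures, for \emph{each} of the six sides $H_iH_{i+1}$, a protrusion parameter $a_i\in[0,1]$ (concretely, $a_i=|U_iV_i|/|H_iV_i|$, where $U_i$ is cut out by the chord from $H_{i+1}$ through the boundary point $B_i$ on the ray $OM_i$); one then sets $a:=\min_i a_i$ and chooses $T$ so that the extremal index $j$ with $a_j=a$ lands at the bottom. With this definition the lower inclusion $\LL_a\subset T(\A)$ is exactly as you say: the seventh vertex $(0,\tfrac{2a}{1+a})=T(B_j)$ lies on $\partial T(\A)$. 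But the upper inclusion is \emph{not} a trade-off (``if it reaches far here it cannot reach far there''); it is the opposite. Because $a\le a_i$ for every $i$, the segment $U_i^*W_i^*$ at level $a$ lies outside $\A$ in \emph{every} triangle $H_iV_iH_{i+1}$ simultaneously, which already gives $T(\A)\subset\UU_a$ for $a\in[\tfrac12,1]$ with $\UU_a$ the symmetric $12$-gon. No supporting-line estimate beyond convexity is needed.

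Your reading of the case split at $a=\tfrac12$ is also off. It is not that different supporting half-planes become ``active''; rather, for small $a$ one can \emph{improve} the outer bound near the extremal side $H_jH_{j+1}$ by using the actual chord through $B_j$ and $H_{j+1}$ (and its mirror through $B_j$ and $H_j$). This replaces four vertices of the $12$-gon by two better ones, namely $(\pm(2+a),2-a)$, and is precisely why $\UU_a$ is only a $10$-gon for $a<\tfrac12$. This refinement depends on having placed the extremal direction at the bottom via the rotational choice of $T$, which your outline never makes. Without taking $a=\min_i a_i$ and orienting $T$ accordingly, neither the $12$-gon containment nor the $10$-gon refinement would follow.
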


Let us illustrate some intuition behind the statement of the lemma. The resulting $a$ in a certain sense measures how far is $\A$ from an inscribed affine regular hexagon. In particular, if $a=1$, then we have $T(\A)=\LL_1=\UU_1$, i.e. $\A$ is an affine regular hexagon. At the other extreme, if $a=0$, then there is a point of $\A$ (namely, $T^{-1}(0,0)$) which is at its ``furthest'' from the inscribed affine regular hexagon. In this case we have that pretty much ``half'' of the body is determined since
\[
T(\A)\cap\{(x,y):y\le 2\}=
\LL_0\cap\{(x,y):y\le 2\}=
\UU_0\cap\{(x,y):y\le 2\}=
\{(x,y):|x|\le y\le 2 \}.
\]

\begin{proof}[Proof of Lemma~\ref{thm:main}.]
	Using~\cite{Be}, we let $H_i\in\partial \A$, $i=1,\dots,6$, be the vertices of an inscribed affine regular hexagon with the center $O$, where $\partial \A$ denotes the boundary of $\A$. Let $M_i$ be the midpoint of the segment $H_iH_{i+1}$ (indices are considered modulo $6$), and let $V_i$ be the point of intersection of the lines $H_{i-1}H_i$ and $H_{i+1}H_{i+2}$ (alternatively, $\overrightarrow{OV_i}=2\overrightarrow{OM_i}$). Let $B_i$ be the point of intersection of the ray $OM_i$ with $\partial \A$, $U_i$ be the point of intersection of the lines $H_{i+1}B_i$ and $H_iV_i$, and $W_i$ be the point of intersection of the lines $H_iB_i$ and $H_{i+1}V_i$, see Figure~\ref{fig:def}. We define $a_i:=|U_iV_i|/|H_iV_i|$ (by symmetry, $a_i=|W_iV_i|/|H_{i+1}V_i|$), where $|XY|$ stands for the Euclidean distance between $X$ and $Y$, $a:=\ds\min_i a_i$, and $j$ be such that $a_j=a$. Finally, define $T$ to be the affine transform mapping $H_{j+1}$, $\dots$, $H_{j+6}$ to $(1,1)$, $(2,2)$, $(1,3)$, $(-1,3)$, $(-2,2)$, $(-1,1)$, respectively. It is straightforward to check that $T(B_j)=(0,\frac{2a}{1+a})$. 
\begin{figure}[h]
	\psset{unit=1.5cm}
	\begin{pspicture}(-2.5,-1.8)(2.5,2.5)
	\psline{}(-2,1.7)(0,-1.7)
	\psline{}(-1,0)(1,0)
	\psline{}(0,-1.7)(2,1.7)
	\psline{}(0,-1.7)(0,1.7)
	\psline{}(-0.5455,-0.773)(1,0)
	\psline{}(0.5455,-0.773)(-1,0)
	\pscurve[linestyle=dashed]{}(-1.9,2.1)(-2,1.7)(-1.8,0.85)(-1,0)(0,-0.5)(1,0)(2,1.7)(2.1,2.1) 
	\uput[100](2.1,2.1){$\partial \A$}
	\psdot(-2,1.7) \uput[225](-2,1.7){$H_{i-1}$}
	\psdot(0,-0.51) \uput{0.15}[120](0,-0.51){$B_i$}
	\psdot(0,1.7) \uput[85](0,1.7){$O$}
	\psdot(0,0) \uput[135](0,0){$M_i$}
	\psdot(-1,0) \uput[225](-1,0){$H_i$}
	\psdot(1,0) \uput[-45](1,0){$H_{i+1}$}
	\psdot(-0.5455,-0.773) \uput[200](-0.5455,-0.773){$U_i$}
	\psdot(0.5455,-0.773) \uput[-25](0.5455,-0.773){$W_i$}
	\psdot(0,-1.7) \uput[225](0,-1.7){$V_i$}
	\psdot(2,1.7) \uput[0](2,1.7){$H_{i+2}$}
	\psline{}(-0.4,-1.02)(0.4,-1.02)
	\psdot(-0.4,-1.02) \uput[225](-0.4,-1.02){$U_i^*$}
	\psdot(0.4,-1.02) \uput[-45](0.4,-1.02){$W_i^*$}
	\end{pspicture}
	\caption{Location of the points $U_i$, $W_i$, $U_i^*$ and $W_i^*$}
	\label{fig:def}
\end{figure}	
	
	Since all seven points defining $\LL_a$ belong to $\partial T(\A)$, we get by convexity that $\LL_a\subset T(A)$.
	
	For every $i$, let $U_i^*$ and $W_i^*$ be the points on the segments $H_iV_i$ and $H_{i+1}V_i$ respectively such that $a=|U_i^*V_i|/|H_iV_i|=|W_i^*V_i|/|H_{i+1}V_i|$. Due to the choice of $a$, we have $|U_iV_i|\ge|U_i^*V_i|$ and $|W_iV_i|\ge|W_i^*V_i|$, so by convexity the segment $U_i^*W_i^*$ does not contain interior points of $\A$ (this segment can have common points with $\partial \A$ only for the ``degenerate'' cases when $a=0$ or $a=1$). Therefore, $\A$ is a subset of the $12$-gon with the vertices $U_i^*$ and $W_i^*$. One can easily see that $T(U_i^*)$ and $T(W_i^*)$ are exactly the points defining $\UU_a$ when $a\in[\frac12,1]$. Therefore, $T(\A)\subset \UU_a$ if $a\in[\frac12,1]$. For the remainder of the proof suppose $a\in[0,\frac12)$ . We need certain additional considerations in the triangles $H_{j+1}V_{j+1}H_{j+2}$ and $H_{j+5}V_{j+5}H_j$. Due to symmetry, let us consider only $H_{j+1}V_{j+1}H_{j+2}$. Let $C$ be the point of intersection of the lines $B_jH_{j+1}$ and $H_{j+2}V_{j+1}$.
	Since $B_j$ and $H_{j+1}$ belong to $\partial \A$, by convexity there are no interior points of $\A$ on the segment $H_{j+1}C$. Noting that $T(C)=(2+a,2-a)$, this completes the proof of $T(\A)\subset \UU_a$, see also Figure~\ref{fig:small-a}. 
\begin{figure}[h]
	\psset{unit=1.7cm}
	\begin{pspicture}(-3,0)(3,4)
	\psline{}(-3,1)(3,1)(0,4)(-3,1)
	\psline{}(-3,3)(3,3)(0,0)(-3,3)
	\psline{}(-0.3,0.3)(2.3,1.7)
	\pscurve[linestyle=dashed]{}(-1,1)(0,0.4615)(1,1)(2,2)(1,3)(-1,3)(-2,2)(-1,1)
	\psline[linestyle=dotted]{}(-0.3,0.3)(0.3,0.3)(2.3,1.7)(2.7,2.7)(2.4,3)(0.3,3.7)(-0.3,3.7)(-2.4,3)(-2.7,2.7)(-2.3,1.7)(-0.3,0.3)
	\psdot(0,0.4615) \uput[90](0,0.4615){$\scriptstyle T(B_j)$}
	\uput{0.12}[135](-1.75,2.35){$\scriptstyle \partial T(\A)$}
	\psdot(1,1) \uput[135](1,1){$\scriptstyle T(H_{j+1})$}
	\psdot(2,2) \uput{0.1}[180](2,2){$\scriptstyle T(H_{j+2})$}
	\psdot(1,3) \uput{0.1}[240](1,3){$\scriptstyle T(H_{j+3})$}
	\psdot(-1,1) \uput[50](-1,1){$\scriptstyle T(H_{j})$}
	\psdot(-2,2) \uput{0.1}[0](-2,2){$\scriptstyle T(H_{j+5})$}
	\psdot(-1,3) \uput{0.1}[-50](-1,3){$\scriptstyle T(H_{j+4})$}
	\psdot(0.3,0.3) \uput[-45](0.3,0.3){$\scriptstyle (a,a)$}
	\psdot(-0.3,0.3) \uput[225](-0.3,0.3){$\scriptstyle (-a,a)$}
	\psdot(2.3,1.7) \uput[0](2.3,1.7){$\scriptstyle T(C)$}
	\psdot(-2.3,1.7) 
	\psdot(2.7,2.7) \uput[-40](2.7,2.7){$\scriptstyle (3-a,3-a)$}
	\psdot(-2.7,2.7) \uput[220](-2.7,2.7){$\scriptstyle (-(3-a),3-a)$}
	\psdot(2.4,3) \uput[80](2.4,3){$\scriptstyle (3-2a,3)$}
	\psdot(-2.4,3) \uput[100](-2.4,3){$\scriptstyle (-(3-2a),3)$}
	\psdot(2.4,1) \uput[-80](2.4,1){$\scriptstyle (3-2a,1)$}
	\psdot(-2.4,1) \uput[-100](-2.4,1){$\scriptstyle (-(3-2a),1)$}
	\psdot(0.3,3.7) \uput[60](0.3,3.7){$\scriptstyle (a,4-a)$}
	\psdot(-0.3,3.7) \uput[120](-0.3,3.7){$\scriptstyle (-a,4-a)$}	
	\psdot(2.7,1.3) \uput[0](2.7,1.3){$\scriptstyle (3-a,1+a)$}	
	\psdot(-2.7,1.3) \uput[180](-2.7,1.3){$\scriptstyle (-(3-a),1+a)$}	
	\uput[70](1.45,3.25){$\scriptstyle \UU_a$}
	\end{pspicture}
	\caption{Illustration of the inclusion $T(\A)\subset \UU_a$ when $a\in[0,\tfrac12)$}
	\label{fig:small-a}
\end{figure}	
\end{proof}

For our applications, it will be important to understand how to cover $\UU_b$ with a homothetic image of $\LL_a$, for certain values of $a$ and $b$.

\begin{lemma}\label{lem:big-b}
	Let $a\in[0,1]$, $b\in[\frac12,1]$ and $h$ be the homothety with the ratio $2-b$ and the center $(0,2)$. Then $\UU_b \subset h(\LL_a)$. 
\end{lemma}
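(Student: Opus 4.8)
The plan is to reduce the claim, which is stated uniformly for all $a\in[0,1]$, to the single worst case $a=1$, and then to verify an explicit inscription. First I would record the homothety explicitly: since $h$ has ratio $2-b$ and center $(0,2)$, one has $h(x,y)=\big((2-b)x,\,(2-b)y+2b-2\big)$, so that $h$ fixes $(0,2)$ and sends the vertices $(1,1),(2,2),(1,3)$ of $\LL_a$ to $(2-b,b),(4-2b,2),(2-b,4-b)$ respectively (and their mirror images to the corresponding mirror images).

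The key observation is a monotonicity in $a$. The only vertex of $\LL_a$ that depends on $a$ is the bottom one $(0,\tfrac{2a}{1+a})$, and since $\tfrac{2a}{1+a}\le 1$ for $a\in[0,1]$, this point lies on or below the segment joining $(-1,1)$ and $(1,1)$. Writing $S=\{(\pm1,1),(\pm2,2),(\pm1,3)\}$, we have $(0,1)\in\conv(S)$, hence $\LL_1=\conv(S)\subseteq\conv\big(S\cup\{(0,\tfrac{2a}{1+a})\}\big)=\LL_a$ for every $a\in[0,1]$. Applying the inclusion-preserving homothety gives $h(\LL_1)\subseteq h(\LL_a)$, so it suffices to prove $\UU_b\subset h(\LL_1)$.

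For $a=1$ the moving vertex becomes $(0,1)$, with image $(0,b)$, which is collinear with $(\pm(2-b),b)$; thus $h(\LL_1)$ is the hexagon with vertices $(\pm(2-b),b),(\pm(4-2b),2),(\pm(2-b),4-b)$, bounded below by $y=b$, above by $y=4-b$, on the lower right by the line $x-y=2-2b$ and on the upper right by $x+y=6-2b$ (the two left edges being symmetric). Since $\UU_b$ is the convex hull of its twelve defining vertices, it is enough to check that each of them satisfies all these edge inequalities; by the symmetry in $x$ I would treat only the six vertices with $x\ge0$, namely $(b,b),(3-2b,1),(3-b,1+b),(3-b,3-b),(3-2b,3),(b,4-b)$. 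A direct substitution shows that each in fact lies on the boundary of $h(\LL_1)$: $(b,b)$ and $(b,4-b)$ sit on the horizontal edges $y=b$ and $y=4-b$ (using $b\le 2-b$, i.e. $b\le1$, to stay within the edge); $(3-2b,1)$ and $(3-b,1+b)$ satisfy $x-y=2-2b$ with $y\in[b,2]$; and $(3-b,3-b),(3-2b,3)$ satisfy $x+y=6-2b$ with $y\in[2,4-b]$. Hence every vertex of $\UU_b$ belongs to $h(\LL_1)$, and convexity yields $\UU_b\subset h(\LL_1)\subseteq h(\LL_a)$.

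The individual computations are elementary, so the only real content is the monotonicity reduction to $a=1$. The one thing to get right carefully is to confirm that each boundary point of $\UU_b$ falls inside the correct edge of $h(\LL_1)$ rather than merely on its supporting line, and for this the constraints $\tfrac12\le b\le 1$ are precisely what is needed.
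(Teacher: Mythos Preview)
Your proof is correct and follows exactly the paper's approach: reduce to $a=1$ via $\LL_1\subset\LL_a$, then verify that the twelve points defining $\UU_b$ lie in $h(\LL_1)$. The paper compresses this into a single sentence, while you have written out the explicit edge inequalities and vertex checks; one minor quibble is that your closing remark overstates the role of $b\ge\tfrac12$, which is needed only to select the correct vertex list for $\UU_b$, whereas the edge-containment checks themselves use only $0\le b\le 1$.
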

\begin{proof}
	This is immediate by $\LL_1\subset\LL_a$ and the fact that $h(\LL_1)$ contains all $12$ points defining $\UU_b$. 
\end{proof}

\begin{lemma}\label{lem:aux-comp-coef}
	For any $a,b\in[0,\frac12)$ there exists $c$ such that the homothety $h$ with the ratio $\lambda(a,b)$ and the center $(0,c)$ satisfies $\UU_b\subset h(\LL_a)$, where
	\begin{equation}\label{eqn:lambda-half}
	\lambda({a},{b}):=
	\begin{cases}
	1+\tfrac{(1-{b})(1+2{a})}{2+{a}}, & \text{if } {b} \le \tfrac{3{a}}{2(1+2{a})}, \\
	\tfrac32, & \text{otherwise}.
	\end{cases}
	\end{equation}	
\end{lemma}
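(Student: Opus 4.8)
The plan is to reduce the inclusion $\UU_b\subset h(\LL_a)$ to a finite system of linear inequalities by comparing the supporting half-planes of $h(\LL_a)$ with the vertices of $\UU_b$, exploiting the reflective symmetry about the $y$-axis so that only the right half of the figure must be examined.

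First I would record the edge description of $\LL_a$ for $a\in[0,\tfrac12)$. On the right it is cut out by the four half-planes $y\le 3$, $x+y\le 4$, $x\le y$, and $(1-a)x-(1+a)y+2a\le 0$ (the bottom-right edge through $(0,\frac{2a}{1+a})$ and $(1,1)$; call it $e_1$), and the left half is the mirror image. A homothety with center $(0,c)$ and ratio $\lambda$ sends $(x,y)$ to $(\lambda x,\lambda y+d)$ with $d:=c(1-\lambda)$, and it carries each half-plane $\alpha x+\beta y\le\gamma$ into $\alpha x+\beta y\le \lambda\gamma+\beta d$. I would then list the five right-hand vertices of $\UU_b$, namely $(b,b)$, $(2+b,2-b)$, $(3-b,3-b)$, $(3-2b,3)$ and $(b,4-b)$, and note that, $h(\LL_a)$ being convex and symmetric, it suffices to check that each of these lies in every half-plane of $h(\LL_a)$.

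The observation driving the choice of $c$ is that the edge of $\UU_b$ joining $(3-b,3-b)$ and $(3-2b,3)$ lies on the line $x+y=6-2b$, parallel to the edge $x+y=4$ of $\LL_a$. So I would fix $c$ so that $h$ maps the line $x+y=4$ onto $x+y=6-2b$; this forces $d=6-2b-4\lambda$, hence $c=(6-2b-4\lambda)/(1-\lambda)$, which is well defined since $\lambda\ge\tfrac32>1$. With this single choice the two upper-right vertices land on $\partial h(\LL_a)$ automatically, and every remaining vertex inequality collapses to a comparison among $a$, $b$ and $\lambda$. The top-edge conditions $y\le 3\lambda+d$ reduce to $\lambda\le 2-b$, which holds throughout $a,b\in[0,\tfrac12)$ because $a+b-ab=1-(1-a)(1-b)<1$; the conditions against the left-hand half-planes are slack for right-hand vertices; and the only genuinely binding conditions are the bottom vertex $(b,b)$ against $e_1$ and the shoulder vertex $(2+b,2-b)$ against the edge $x\le y$.

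These two conditions are where the case split appears, and showing that the threshold is exactly $b=\frac{3a}{2(1+2a)}$ is the heart of the argument. After substituting $d=6-2b-4\lambda$, the shoulder condition becomes $\lambda\ge\tfrac32$, while the bottom-vertex condition becomes $(1+a)(6-2b-4\lambda)\le 2a(b-\lambda)$. When $b\le\frac{3a}{2(1+2a)}$ I would take $\lambda=1+\frac{(1-b)(1+2a)}{2+a}$, which is precisely the value making the bottom-vertex condition an equality (so $(b,b)$ lands on $e_1$); a short computation shows this $\lambda$ is $\ge\tfrac32$ under the same hypothesis, so the shoulder condition holds with room to spare. When $b>\frac{3a}{2(1+2a)}$ I would instead set $\lambda=\tfrac32$, for which $d=-2b$ places $(2+b,2-b)$ on the line $x=y-d$, and the reversed case inequality now delivers the bottom-vertex condition as a slack consequence. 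I expect the main obstacle to be the bookkeeping needed to confirm that the non-binding vertices never escape $h(\LL_a)$ — in particular the shoulder against $e_1$, which in the first case requires the auxiliary estimate $b\le\frac{2a}{1+a}$ (a consequence of $b\le\frac{3a}{2(1+2a)}$). The algebra is routine but must be organized so that each comparison is visibly implied by the defining case inequality.
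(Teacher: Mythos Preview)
Your proof is correct and follows essentially the same approach as the paper: both arguments reduce $\UU_b\subset h(\LL_a)$ to checking the vertices of $\UU_b$ against the half-planes defining $h(\LL_a)$, and both arrive at the same centers $c=\tfrac{6a}{1+2a}$ (first case) and $c=4b$ (second case). The only organizational difference is that the paper leaves $c$ free, writes $\lambda-1\ge\max\{f_1(c),f_2(c),f_3(c)\}$ with $f_3$ coming from the edge $x+y=4$, and then minimizes over $c$, whereas you impose $f_3$ as an equality up front (your edge-alignment $d=6-2b-4\lambda$) and then verify the remaining constraints; the resulting computations and case split are identical.
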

\begin{proof}
	First let us fix $c$ satisfying $\max\{\frac{2a}{1+a},\frac{2b}{1+b}\} < c < 2$ and show that $\UU_b\subset h(\LL_a)$ where $h$ is the homothety with the ratio $\lambda$ and the center $(0,c)$ provided
	\begin{equation}\label{eqn:lambda-bound}
	\lambda \ge 1+\max\left\{\frac{2{a}(1-{b})}{c+c{a}-2{a}},\frac{2{b}}{c},\frac{2(1-{b})}{4-c}\right\}.
	\end{equation}
	It is convenient to represent $\LL_a$ in terms of half-planes, namely,
	\begin{equation}\label{eqn:LLa-def}
	\LL_a=\{(x,y):(1+a)y\ge (1-a)|x|+2a,\, y\ge|x|,\, y\le 4-|x|,\, y\le3 \}.
	\end{equation}
	We have $h^{-1}(x,y)=(\tfrac1\lambda x, \tfrac1\lambda(y-c)+c)$, so if $(x,y)$ is a point defining $\UU_b$, by substitution of $h^{-1}(x,y)$ into the inequalities of~\eqref{eqn:LLa-def}, one can compute that
	\begin{equation}\label{eqn:max-lambda}
	\lambda-1\ge \max \left\{
	\frac{|x|-a|x|-y-ay+2a}{c+ac-2a},
	\frac{|x|-y}{c},
	\frac{|x|+y-4}{4-c},
	\frac{y-3}{3-c}
	\right\},
	\end{equation}
	where $\frac{2a}{1+a}<c<2$ was used. Now we substitute the points defining $\UU_b$ into~\eqref{eqn:max-lambda} and simplify the result. For $(x,y)=(b,b)$, since $a,b\in[0,\frac12)$ and $\frac{2a}{1+a}<c<2$, we get
	\begin{equation}\label{eqn:b1}
	\lambda-1\ge \max \left\{
	\frac{2a(1-b)}{c+ac-2a},
	0,
	\frac{-2(2-b)}{4-c},
	\frac{-(3-b)}{3-c}
	\right\} = \frac{2a(1-b)}{c+ac-2a}.
	\end{equation}
	In the same manner, we substitute $(3-b,3-b)$, $(3-2b,3)$, $(b,4-b)$ and $(2+b,2-b)$ into~\eqref{eqn:max-lambda}, and get that $\lambda-1$ is at least $\frac{2(1-b)}{4-c}$, $\frac{2(1-b)}{4-c}$, $\frac{1-b}{3-c}$ and $\max\{\frac{2(b-a)}{c+ac-2a},\frac{2b}{c}\}$, respectively. Therefore, taking~\eqref{eqn:b1} into account and noting that $\frac{2(1-b)}{4-c} > \frac{1-b}{3-c}$ as $c<2$, and that $\frac{2(b-a)}{c+ac-2a}<\frac{2b}{c}$ as $\frac{2b}{1+b}<c$, we obtain the desired~\eqref{eqn:lambda-bound}.
	
It remains to choose $c$ to minimize the right hand side of~\eqref{eqn:lambda-bound}. The functions $f_1(c):=\frac{2{a}(1-{b})}{c+c{a}-2{a}}$ and $f_2(c):=\frac{2{b}}{c}$ are decreasing, while the function $f_3(c):=\frac{2(1-{b})}{4-c}$ is increasing. We can compute and bound the points of intersection: $f_1=f_3$ at $c_1:=\frac{6{a}}{1+2{a}}\in (\frac{2{a}}{1+{a}},2)$ and $f_2=f_3$ at $c_2:=4{b}\in(\frac{2{b}}{1+{b}},2)$. Thus, we can set $\lambda({a},{b}):=1+f_3(\max\{c_1,c_2\})$, which leads to~\eqref{eqn:lambda-half}.
\end{proof}

\section{Bound on diameter -- proof of Theorem~\ref{thm:diam}} \label{sec:diam}

	Let $\A,\B\in\C^2$ be arbitrary. By Lemma~\ref{thm:main}, for some $a,b\in[0,1]$ and affine transforms $T_a$ and $T_b$, we have $\LL_a\subset T_a(\A)\subset \UU_a$ and $\LL_b\subset T_b(\B)\subset \UU_b$. Assume $a\ge b$ and fix $\alpha\in(0,\tfrac12)$. When $a\ge\alpha$, i.e., when one of the bodies is ``close'' to an affine-regular hexagon, we will apply a small modification of the construction from~\cite{La}. Otherwise, we will use Lemma~\ref{lem:aux-comp-coef}.
	
	We claim that $d(\A,\B)\le 3-a$. Let $T$ be an affine transform mapping $\LL_1$ to the hexagon $\H$ with the vertices $(0,\frac43)$, $(\pm1,\frac53)$, $(\pm1,\frac73)$, $(0,\frac83)$, and so $\H$ is inscribed into $T(T_b(\B)))$. Note that the lines defining the sides of $\H$ intersect (apart from the vertices of $\H$) at the vertices of $\LL_1$. Therefore, by convexity and Lemma~\ref{thm:main}, \[\H\subset T(T_b(\B))\subset \H_1\subset T_a(\A)\subset \UU_a,\] so to prove our claim it suffices to show that the homothety of $\H$ with the center $(0,2)$ and the ratio $3-a$ contains $\UU_a$, which is straightforward to verify (e.g. the technique of the proof of Lemma~\ref{lem:aux-comp-coef} can be used). 
	
	If ${a},{b}\in[0,\alpha]$, then by Lemma~\ref{thm:main} and Lemma~\ref{lem:aux-comp-coef}, there are homotheties $h_1$ and $h_2$ with the ratios ${\lambda}({a},{b})$ and ${\lambda}({b},{a})$ such that $T_b(\B)\subset h_1(T_a(\A))$ and $T_a(\A) \subset h_2(T_b(\B))$. Then $T_a(\A)\subset h_2(T_b(\B))\subset h_2(h_1(T_a(\A)))$, so ${d}(\A,\B)\le {\lambda}({a},{b}){\lambda}({b},{a})$. 
	
	We can summarize the preceding paragraphs as
	\begin{equation}\label{eqn-to-compute}
	{d}(\A,\B) \le
	\inf_{\alpha\in\left(0,\tfrac12\right)}\max\left\{3-\alpha,\max_{{a},{b} \in [0,\alpha]} {\lambda}({a},{b}){\lambda}({b},{a})\right\}.
	\end{equation}
	
	Now assume ${a},{b}\in[0,\alpha]$ and consider several cases in order to estimate ${\lambda}({a},{b}){\lambda}({b},{a})$.
	If ${b}>\tfrac{3{a}}{2(1+2{a})}$ and ${a}>\tfrac{3{b}}{2(1+2{b})}$, then ${\lambda}({a},{b}){\lambda}({b},{a})=\tfrac94$.
	If ${b}\le\tfrac{3{a}}{2(1+2{a})}$ and ${a}\le\tfrac{3{b}}{2(1+2{b})}$, then ${\lambda}({a},{b}){\lambda}({b},{a})=(1+(1-{b})f({a}))(1+(1-{a})f({b}))$, where $f(t)=\tfrac{1+2t}{2+t}$ which is increasing for $t>-2$. Therefore, $(1-{b})f({a})\le(1-{b})f(\tfrac{3{b}}{2(1+2{b})})=2\tfrac{1+4{b}-5{b}^2}{4+11{b}}$, which, using standard calculus, attains its largest value on $[0,\tfrac12)$ at ${b}=\tfrac{3\sqrt{3}-4}{11}$. Hence,
	$
	1+(1-{b})f({a})\le\frac{289-60\sqrt{3}}{121}\approx 1.52956 < 1.53.
	$
	Arguing similarly for $(1-{a})f({b})$, we obtain
	$
	{\lambda}({a},{b}){\lambda}({b},{a})< 1.53^2 =  2.3409<2.35.
	$
	If ${b}\le\tfrac{3{a}}{2(1+2{a})}$ and ${a}>\tfrac{3{b}}{2(1+2{b})}$, then 
	\[
	{\lambda}({a},{b}){\lambda}({b},{a})=\tfrac32(1+(1-{b})\tfrac{1+2{a}}{2+{a}})\le\tfrac32(1+\tfrac{1+2{a}}{2+{a}})=\tfrac{9(1+{a})}{2(2+{a})}\le\tfrac{9(1+\alpha)}{2(2+\alpha)}.
	\]
	Similarly, if ${b}>\tfrac{3{a}}{2(1+2{a})}$ and ${a}\le\tfrac{3{b}}{2(1+2{b})}$, then ${\lambda}({a},{b}){\lambda}({b},{a})\le\tfrac{9(1+\alpha)}{2(2+\alpha)}$.
	In summary, since $\max\{3-\alpha,2.35,\tfrac94\}\ge 2.5$, \eqref{eqn-to-compute} becomes
	\[
	{d}(\A,\B)\le
	\inf_{\alpha\in\left(0,\tfrac12\right)}\max\left\{3-\alpha,\tfrac{9(1+\alpha)}{2(2+\alpha)}\right\},
	\]
	which is optimal when $3-\alpha=\tfrac{9(1+\alpha)}{2(2+\alpha)}$, i.e. when $\alpha=\tfrac{\sqrt{73}-7}4$. This implies 
	$
	{d}(\A,\B)\le
	\frac{19-\sqrt{73}}4.
	$

\section{Bound on radius -- proof of Theorem~\ref{thm:rad}} \label{sec:rad}

Note that $\A=\LL_{\frac14}$, so we need to show for any $\B\in\C^2$ that $d(\A,\B)\le\frac{117}{70}$. Apply Lemma~\ref{thm:main} to $\B$, then for some $b\in[0,1]$ and an affine transform $T$ we have $\LL_b \subset T(\B) \subset \UU_b$. It is rather immediate from Definition~\ref{def:polygons} that if $h_1$ is the homothety with the center $(0,3)$ and the ratio
$\max\{(3-\frac{2b}{1+b})/(3-\frac25),1\},
$ then $h_1(\LL_{\frac14})\subset \LL_b$. If $h_2$ is a homothety with the ratio $\lambda>0$ such that $\UU_b \subset h_2(\LL_{\frac14})$, then by
\[
h_1(\LL_{\frac14})\subset \LL_b \subset T(\B) \subset \UU_b \subset h_2(\LL_{\frac14})
\]
we get
\[
d(\A,\B)\le \frac{\lambda}{\max\{(3-\frac{2b}{1+b})/(3-\frac25),1\}}.
\] 
If $b\in[0,\frac14]$, then by Lemma~\ref{lem:aux-comp-coef} we have $\lambda= \lambda(\frac14,b)\le \lambda(\frac14,0)=\frac53$, so $d(\A,\B)\le\frac53<\frac{117}{70}$. If $b\in[\frac14,\frac12)$, then again by  Lemma~\ref{lem:aux-comp-coef} we have $\lambda= \lambda(\frac14,b)=\frac32$, and as $(3-\frac{2b}{1+b})/(3-\frac25)>\frac{35}{39}$, we obtain $d(\A,\B)<\frac{117}{70}$. Finally, if $b\in[\frac12,1]$, we use Lemma~\ref{lem:big-b} to take $\lambda=2-b$ and then
$
d(\A,\B)\le \frac{13(2-b)(1+b)}{5(3-b)}\le \frac{117}{70}.
$

{\bf Acknowledgment.} The authors are grateful to the anonymous referee for several suggestions that improved the paper.

\begin{bibsection}
\begin{biblist}

\bib{Be}{article}{
	author={Besicovitch, A. S.},
	title={Measure of asymmetry of convex curves},
	journal={J. London Math. Soc.},
	volume={23},
	date={1948},
	pages={237--240},
}


\bib{Fl}{article}{
	author={Fleischer, Rudolf},
	author={Mehlhorn, Kurt},
	author={Rote, G\"unter},
	author={Welzl, Emo},
	author={Yap, Chee},
	title={Simultaneous inner and outer approximation of shapes},
	note={1990 Computational Geometry Symposium (Berkeley, CA, 1990)},
	journal={Algorithmica},
	volume={8},
	date={1992},
	number={5-6},
	pages={365--389},
}

\bib{Gl}{article}{
	author={Gluskin, E. D.},
	title={The diameter of the Minkowski compactum is roughly equal to $n$},
	language={Russian},
	journal={Funktsional. Anal. i Prilozhen.},
	volume={15},
	date={1981},
	number={1},
	pages={72--73},
}

\bib{Go}{article}{
	author={Gordon, Y.},
	author={Litvak, A. E.},
	author={Meyer, M.},
	author={Pajor, A.},
	title={John's decomposition in the general case and applications},
	journal={J. Differential Geom.},
	volume={68},
	date={2004},
	number={1},
	pages={99--119},
}

\bib{Jo}{article}{
	author={John, Fritz},
	title={Extremum problems with inequalities as subsidiary conditions},
	conference={
		title={Studies and Essays Presented to R. Courant on his 60th
			Birthday, January 8, 1948},
	},
	book={
		publisher={Interscience Publishers, Inc., New York, N. Y.},
	},
	date={1948},
	pages={187--204},
}


\bib{La}{article}{
	author={Lassak, Marek},
	title={Banach-Mazur distance of planar convex bodies},
	journal={Aequationes Math.},
	volume={74},
	date={2007},
	number={3},
	pages={282--286},
}

\bib{La-triangle}{article}{ 
author={Lassak, Marek}, 
title={Approximation of convex bodies by triangles}, 
journal={Proc. Amer. Math. Soc.}, 
volume={115}, 
date={1992}, 
number={1}, 
pages={207--210}, 
} 		

\bib{Ru}{article}{
	author={Rudelson, M.},
	title={Distances between non-symmetric convex bodies and the
		$MM^\ast$-estimate},
	journal={Positivity},
	volume={4},
	date={2000},
	number={2},
	pages={161--178},
}

\bib{St}{article}{
	author={Stromquist, Walter},
	title={The maximum distance between two-dimensional Banach spaces},
	journal={Math. Scand.},
	volume={48},
	date={1981},
	number={2},
	pages={205--225},
}


\end{biblist}
\end{bibsection}

\end{document}